\documentclass[11pt,a4paper]{amsart}
\usepackage{hyperref,amsthm,amsmath,amssymb,graphicx}
\usepackage[numbers,sort&compress]{natbib}

\newcommand{\spacing}[1]{\renewcommand{\baselinestretch}{#1}\setlength{\footnotesep}{\baselinestretch\footnotesep}}
\spacing{1.2}  



\theoremstyle{plain}
\newtheorem{theorem}{Theorem}[section]
\newtheorem{lemma}[theorem]{Lemma}

\theoremstyle{definition}

\newcommand{\DEF}{\sl}       


\newcommand{\eps}{\varepsilon}  
\newcommand{\mc}{\mathcal}

\newcommand{\ns}{\varnothing}    

\newcommand{\chain}[1]{\underline{#1}}
\newcommand{\rsfree}{$(\chain{r}+\chain{s})$-free}
\def\st{\colon\,}    
\newcommand{\comment}[1]{}
\newcommand{\dist}{\mathrm{dist}}


\newcommand{\ff}{First-Fit chain partition}
\newcommand{\FF}{\mathrm{FF}}
\newcommand{\val}{\mathrm{val}}

\begin{document}

\title[First-Fit is Linear on Posets Excl. Long Incomparable Chains]{First-Fit is Linear on Posets Excluding Two Long Incomparable Chains}
\date{\today}

\author{Gwena\"el Joret}
\address{D\'epartement d'Informatique\\
Universit\'e Libre de Bruxelles\\
Brussels, Belgium}
\email[Gwena\"el Joret]{gjoret@ulb.ac.be}
\thanks{
This work was supported in part by the Actions de Recherche 
Concert\'ees (ARC) fund of the Communaut\'e fran\c{c}aise de Belgique. 
The first author is a Postdoctoral Researcher of the 
Fonds National de la Recherche Scientifique (F.R.S.--FNRS)}

\author{Kevin G. Milans}
\address{Department of Mathematics\\
University of South Carolina\\
Columbia, South Carolina}
\email[Kevin G. Milans]{milans@math.sc.edu}
\thanks{
The second author acknowledges support of the National Science Foundation through a fellowship funded by the grant ``EMSW21-MCTP: Research Experience for Graduate Students'' (NSF DMS 08-38434).
}

\sloppy
\maketitle

\begin{abstract}
A poset is $(\chain{r}+\chain{s})$-free if it does not contain 
two incomparable chains of size $r$ and $s$, respectively.
We prove that when $r$ and $s$ are at least $2$, the First-Fit algorithm 
partitions every $(\chain{r}+\chain{s})$-free poset $P$ into at most
$8(r-1)(s-1)w$ chains, where $w$ is the width of $P$.
This solves an open problem of Bosek, Krawczyk, and Szczypka 
({\em SIAM J. Discrete Math., 23(4):1992--1999, 2010}).
\end{abstract}

\section{Introduction}

A {\DEF chain} in a poset is a set of elements that are pairwise comparable, and an {\DEF antichain} is a set of elements that are pairwise incomparable.  The {\DEF height} of a poset is the size of a largest chain, and the {\DEF width} is the size of a largest antichain.  In the {\DEF on-line chain partitioning problem}, the elements of an unknown poset $P$ are revealed one by one in some order.  Each time a new element $x$ is presented, one has to assign a color to $x$, maintaining the property that each color class is a chain. The goal is to minimize the number of chains in the resulting chain partition of $P$.

This classical problem has received increased attention in the recent years; see, for 
example, the survey by Bosek, Felsner, Kloch, Krawczyk, Matecki, and Micek~\cite{BFKKMM_sub}. 
In this context, the quality of a solution is typically compared against the width $w$ of $P$.  Since elements of an antichain must receive distinct colors, at least $w$ colors are needed.  By Dilworth's theorem, if all elements of $P$ are presented before any are colored, then $w$ colors suffice.  In the on-line setting, more colors are needed.

Let $\val(w)$ be the least $k$ such that there is an on-line algorithm that partitions posets of width $w$ into at most $k$ chains.  Establishing that $\val(w)$ is finite when $w\ge 2$ is challenging.  In 1981, Kierstead~\cite{K_CM} proved that $\val(w) \le (5^w - 1)/4$.  For nearly three decades, Kierstead's result was the best known upper bound on $\val(w)$.  Recently, Bosek and Krawczyk~\cite{BK_prepa} showed that $\val(w) \le w^{16 \lg w}$ (see~\cite{BFKKMM_sub} for a proof sketch).  From below, Szemer\'edi proved that $\val(w) \ge \binom{w+1}{2}$ (see~\cite{K_CM, BFKKMM_sub}), and Bosek {\it et al.}~\cite{BFKKMM_sub} showed that $\val(w) \ge (2-o(1))\binom{w+1}{2}$.  One of the central questions in the theory of on-line problems on partial orders is whether $\val(w)$ is bounded above by a polynomial in $w$.

In this paper, we are interested in the performance of an on-line chain partitioning algorithm called First-Fit.  Using the positive integers for colors, First-Fit colors $x$ with the least $j$ such that $x$ and all elements previously assigned color $j$ form a chain.  It is known that, for general posets, the number of chains used by First-Fit is not bounded by a function of $w$. In fact, Kierstead~\cite{K_CM} showed that First-Fit uses arbitrarily many chains on posets of width $2$ (see also~\cite{BKS_SIDMA}).

Nevertheless, First-Fit performs well on certain classes of posets, such as interval orders.  An \emph{interval order} is a poset whose elements are closed intervals on the real line, with $[a,b] < [c,d]$ if and only if $b<c$.  Let $\FF(w)$ be the maximum number of chains that First-Fit uses on interval orders of width $w$.  Kierstead~\cite{K_SIDMA} proved that $\FF(w)\le 40w$.  Kierstead and Qin~\cite{KQ_DM} subsequently improved the bound, showing that $\FF(w)\le 25.8w$.  Later, Pemmaraju, Raman, and Varadarajan~\cite{PRV_SODA} (see also \cite{PRV_TALG}) proved that $\FF(w)\le 10w$ with an elegant argument known as the Column Construction Method.  Their proof was later refined by Brightwell, Kierstead, and Trotter~\cite{BKT_mscript} and independently by Narayanaswamy and Babu~\cite{NS_ORDER} to show that $\FF(w)\le 8w$.  

From early results of Kierstead and Trotter~\cite{KT_SCCGTC}, it follows that $\FF(w) \ge (3+\eps)w$ for some positive $\eps$.  Chrobak and \'Slusarek~\cite{CS_RAIRO} showed that $\FF(w) \ge 4w-9$ when $w\ge 4$ and subsequently improved the multiplicative constant to $4.45$ at the expense of a weaker additive constant.  In 2004, Kierstead and Trotter~\cite{KT_unpub} proved that $\FF(w) \ge 4.99w - c$ for some constant $c$ with the aid of a computer.  Recently, Kierstead, Smith, and Trotter~\cite{KST_unpub} proved that for each positive $\eps$, there is a constant $c$ such that $\FF(w) \ge (5-\eps)w - c$.

If $P$ and $Q$ are posets, then $P+Q$ denotes the poset obtained from disjoint copies of $P$ and $Q$ where elements in the copy of $P$ are incomparable to elements in the copy of $Q$.  A poset $P$ is {\DEF $Q$-free} if no induced subposet of $P$ is isomorphic to $Q$.  We denote by {\DEF $\chain{r}$} the poset consisting of a chain of size $r$.  Fishburn~\cite{F_JMP} characterized the interval orders as the posets that are $(\chain{2}+\chain{2})$-free.  When $r$ and $s$ are at least two, the family of $(\chain{r}+\chain{s})$-free posets contains the family of interval orders.  Bosek, Krawczyk, and Szczypka~\cite{BKS_SIDMA} showed that when $r\ge s$, First-Fit partitions every $(\chain{r}+\chain{s})$-free poset into at most $(3r - 2)(w - 1)w + w$ chains.  They asked whether First-Fit uses only a linear number of chains, in terms of $w$, on $(\chain{r}+\chain{s})$-free posets, as it does on interval orders. This question also appears in the survey of Bosek {\it et al.}~\cite{BFKKMM_sub} and in a recent paper of Felsner, Krawczyk, and Trotter~\cite{FKT_sub}.

We give a positive answer to this question by showing that First-Fit partitions every $(\chain{r}+\chain{s})$-free poset into at most $8(r-1)(s-1)w$ chains.  As far as we know, this also provides the first proof that some on-line algorithm uses $o(w^{2})$ chains on $(\chain{r}+\chain{s})$-free posets.  Our proof is strongly influenced by the Column Construction Method of Pemmaraju {\it et al.}~\cite{PRV_TALG} and can be viewed as a generalization of that technique from interval orders to $(\chain{r}+\chain{s})$-free posets.

In Section~\ref{sec_evo}, we present our generalization of the Column Construction Method and establish several of its properties.  In Section~\ref{sec_thm}, we combine these results with a structural lemma about $(\chain{r}+\chain{s})$-free posets to obtain our main result.

\section{Evolution of Societies}
\label{sec_evo}

Let $P$ be a poset.  A {\DEF \ff} is an ordered partition $C_{1}, \dots, C_{m}$ of $P$ into non-empty chains such that if $i<j$ and $x\in C_j$, then some element in $C_i$ is incomparable to $x$.  Note that if $C_1, \ldots, C_m$ is a \ff , then First-Fit produces this partition when elements in $C_1$ are presented first, followed by elements in $C_2$, and continuing through elements in $C_m$.
Conversely, every ordered partition produced by First-Fit is a \ff .

A {\DEF group} is a set of elements in $P$.  A {\DEF $t$-society} is a pair $(S,F)$ where $S$ is a set of groups and $F$ is a {\DEF friendship function} from $S\times [t]$ to $S\cup\{\star\}$, where $[t]$ denotes the set $\{1,\ldots,t\}$.  Each group $X\in S$ has slots for up to $t$ friends.  We say that {\DEF $X$ lists $Y$ as a friend in slot $k$} if $F(X,k)=Y$.  It is possible that $X$ does not list any friend in slot $k$, in which case $F(X,k)=\star$.

The overview of our proof is as follows.  Given an \rsfree\ poset $P$, we first exploit the structure of $P$ to define an initial $t$-society $(S_0,F_0)$ for some $t$ depending on $s$.  Next, we fix a \ff\ $C_1, \ldots, C_m$, which we extend to an infinite sequence of chains by defining $C_j = \ns$ for $j>m$.  We allow the initial $t$-society to evolve, generating a sequence of $t$-societies $(S_0,F_0),\ldots,(S_n,F_n)$.  For $j\ge 1$, the $t$-society $(S_j,F_j)$ is obtained from $(S_{j-1},F_{j-1})$ by following certain rules that depend on $C_j$ and the previous transitions.  It is helpful to view the $t$-societies as vertices of a path and to associate the edge joining $(S_{j-1},F_{j-1})$ and $(S_j,F_j)$ with the chain $C_j$.

During the evolution, we maintain that $S_0 \supseteq S_1 \supseteq \cdots \supseteq S_n$.  The evolution ends when a $t$-society $(S_n,F_n)$ is generated where $S_n = \ns$.  The proof proceeds in two parts.  First, we show that a long evolution implies that some group in the initial $t$-society is large.  Second, given an \rsfree\ poset $P$, we show how to construct an initial $t$-society of groups inducing subposets of height at most $r-1$ that leads to a long evolution.  Because large posets of bounded height contain large antichains, we obtain a lower bound on the width of $P$.

In our societies, friendship is a lifetime commitment: if $F_{j-1}(X,k)=Y$ and $\{X,Y\}\subseteq S_j$, then $F_j(X,k)=Y$.  If $X$ survives the transition from $S_{j-1}$ to $S_j$ but $Y$ does not, then $X$ either chooses a new friend for its $k$th slot or leaves its $k$th slot empty according to the rules of a {\DEF replacement scheme}.  We postpone the presentation of the details of our replacement scheme and the construction of the initial $t$-society.

A group $X$ may survive the transition from $S_{j-1}$ to $S_j$ in three ways, each of which defines a transition type.  We use the first three Greek letters $\alpha$, $\beta$, and $\gamma$ to name the transition types.  When $a \in \{\alpha,\beta,\gamma\}$ and $i\le j$, we define $N_{i,j}^a(X)$ to be the number of transitions of type $a$ that $X$ makes in the evolution from $(S_i,F_i)$ to $(S_j,F_j)$.

Let $\eps = 1/2t$; in Lemma~\ref{lem:large-group}, we will find that this choice of $\eps$ is optimal.  We now describe the rules that govern which groups survive the $j$th transition from $S_{j-1}$ to $S_j$.  Let $X$ be a group in $S_{j-1}$.
\begin{enumerate}
\item If $X$ has non-empty intersection with $C_j$, then $X$ makes an $\alpha$-transition from $S_{j-1}$ to $S_j$.
\item Otherwise, if some friend of $X$ in the $t$-society $(S_{j-1},F_{j-1})$ has non-empty intersection with $C_j$, then $X$ makes a $\beta$-transition from $S_{j-1}$ to $S_j$.
\item Otherwise, if there is an $i$ such that $N_{i,j-1}^\alpha(X) > \eps(j-i)$, then $X$ makes a $\gamma$-transition from $S_{j-1}$ to $S_j$.
\end{enumerate}
If none of the three rules apply, then $X\not\in S_j$, and other groups that list $X$ as a friend and survive to $S_j$ update their list of friends according to the replacement scheme.

First, we show that a long evolution implies that some group is large.  We need several lemmas.

\begin{lemma}\label{lem:single-col}
Fix an evolution $(S_0,F_0),\ldots,(S_n,F_n)$ of $t$-societies.  Let $Y_1, Y_2, \ldots, Y_q$ be a list of groups and let  $[a_1,b_1], \ldots, [a_q,b_q]$ be a sequence of disjoint intervals with integral endpoints in $[0,n]$ such that $b_j$ is the largest integer such that $Y_j \in S_{b_j}$.  The sum $\sum_{j=1}^q N_{a_j,b_j}^\alpha(Y_j)$ is at most $\eps n$. 
\end{lemma}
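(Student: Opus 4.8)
The plan is to prove the per-interval bound $N_{a_j,b_j}^\alpha(Y_j)\le\eps(b_j-a_j+1)$ for each $j$, and then add these bounds, using that the intervals $[a_j,b_j]$ are disjoint and (crucially) all sit inside $[0,n-1]$.

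First I would pin down the endpoints. Every evolution terminates with $S_n=\ns$, so $Y_j\notin S_n$; since a group that leaves a society never returns (the societies form a decreasing chain $S_0\supseteq S_1\supseteq\cdots\supseteq S_n$), maximality of $b_j$ forces $b_j\le n-1$ and $Y_j\notin S_{b_j+1}$, while $Y_j\in S_i$ for every $i\le b_j$. In particular $Y_j$ does \emph{not} survive the transition from $S_{b_j}$ to $S_{b_j+1}$, so none of the three survival rules applies to $Y_j$ at step $b_j+1$. Failure of rule~(3) at that step says exactly that $N_{i,b_j}^\alpha(Y_j)\le\eps(b_j+1-i)$ for every admissible index $i$; taking $i=a_j$ (which is admissible since $0\le a_j\le b_j$ and $Y_j\in S_i$ throughout $[a_j,b_j]$) gives $N_{a_j,b_j}^\alpha(Y_j)\le\eps(b_j-a_j+1)$.

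Summing over $j=1,\ldots,q$ yields $\sum_{j}N_{a_j,b_j}^\alpha(Y_j)\le\eps\sum_{j}(b_j-a_j+1)$. For each $j$ the interval $[a_j,b_j]$ contains exactly $b_j-a_j+1$ integers; these integer sets are pairwise disjoint because the intervals are, and by the first step they all lie in $\{0,1,\ldots,n-1\}$. Hence $\sum_{j}(b_j-a_j+1)\le n$, and the sum is at most $\eps n$, as required.

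The argument is short, so there is no real obstacle; the only point needing care is the off-by-one bookkeeping. Rule~(3)'s failure alone only gives $N_{a_j,b_j}^\alpha(Y_j)\le\eps(b_j-a_j+1)$, and a careless count of the available integer positions would produce the weaker bound $\eps(n+1)$. It is essential to use that every evolution ends with $S_n=\ns$: this drives each $b_j$ down to at most $n-1$, leaving only the $n$ integer positions $0,\ldots,n-1$ for the disjoint intervals to occupy, which is precisely what makes the final bound come out to exactly $\eps n$.
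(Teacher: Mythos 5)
Your proof is correct and follows essentially the same route as the paper's: you deduce $Y_j\notin S_{b_j+1}$ from $S_n=\ns$ and the maximality of $b_j$, use the failure of the third survival rule at step $b_j+1$ with $i=a_j$ to get $N_{a_j,b_j}^\alpha(Y_j)\le\eps(b_j+1-a_j)$, and sum over the disjoint intervals inside $\{0,\ldots,n-1\}$. The only cosmetic difference is that the paper first discards the degenerate intervals with $a_j=b_j$, which your uniform bound handles without a separate case.
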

\begin{proof}
If $a_j=b_j$, then clearly $N_{a_j,b_j}^\alpha(Y_j) = 0$.  Hence, we may assume that $0 \le a_1<b_1<\cdots <a_q <b_q$.  Also, $b_q < n$ because $S_n = \ns$.  Note that $Y_j \not\in S_{b_j+1}$.  It follows that $Y_j$ did not satisfy the third condition in the transition from $S_{b_j}$ to $S_{b_j + 1}$ and therefore $N_{a_j,b_j}^\alpha (Y_j) \le \eps(b_j + 1 - a_j)$.  Also, $\sum_{j=1}^q (b_j+1 - a_j) \le n$ because the intervals $[a_j,b_j]$ are disjoint subsets of $[0,n-1]$ with integral endpoints.
\end{proof}

Our next lemma provides a bound on the number of $\beta$-transitions that a group can make if it survives to the last non-empty $t$-society.

\begin{lemma}\label{lem:few-betas}
Fix an evolution $(S_0,F_0),\ldots,(S_n,F_n)$ of $t$-societies.  If $X \in S_{n-1}$, then $N_{0,n-1}^\beta(X) \le t\eps n$.
\end{lemma}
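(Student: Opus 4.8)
The plan is to charge each $\beta$-transition of $X$ to one of its $t$ friendship slots and then apply Lemma~\ref{lem:single-col} slot by slot. Fix $X\in S_{n-1}$. Since $S_0\supseteq S_1\supseteq\cdots$, we have $X\in S_j$ for every $j\le n-1$, so $X$ is present throughout the relevant part of the evolution. Say that $X$ makes a \emph{$\beta$-transition through slot $k$} at step $j$ if $X$ makes a $\beta$-transition from $S_{j-1}$ to $S_j$ and $k$ is the least index for which $F_{j-1}(X,k)$ is a group (rather than $\star$) meeting $C_j$; such a $k$ exists by the definition of a $\beta$-transition, so every $\beta$-transition of $X$ is made through exactly one slot. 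It therefore suffices to show that for each fixed $k\in[t]$, the number of $\beta$-transitions $X$ makes through slot $k$ during steps $1,\ldots,n-1$ is at most $\eps n$; summing over $k$ yields $N_{0,n-1}^\beta(X)\le t\eps n$.

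Fix $k$. The key structural point is that, because friendship is a lifetime commitment and $X$ never leaves $S$ before step $n-1$, the groups that ever occupy slot $k$ of $X$ do so over pairwise disjoint time intervals. Let $Y_1,\ldots,Y_q$ be the distinct groups $Y$ with $F_j(X,k)=Y$ for some $j\in\{0,\ldots,n-1\}$. Once $Y_i$ is installed in slot $k$ it stays there until $Y_i$ leaves $S$ --- after which it can never return, since the $S_j$ are nested --- and only then does the replacement scheme either install a new group or empty the slot. Hence $\{j\in\{0,\ldots,n-1\}\st F_j(X,k)=Y_i\}$ is an interval $[c_i,d_i]$, these intervals are pairwise disjoint, and (using again that $X$ survives and that friendship is permanent, together with $S_n=\ns$) $d_i$ is exactly the largest integer with $Y_i\in S_{d_i}$.

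Finally, I would bound the number of $\beta$-transitions $X$ makes through slot $k$ with witness $Y_i$ by $N_{c_i,d_i}^\alpha(Y_i)$: if $X$ makes such a transition at step $j$, then $F_{j-1}(X,k)=Y_i$ and $Y_i\cap C_j\neq\ns$, and the latter means $Y_i$ makes an $\alpha$-transition at step $j$, so $Y_i\in S_j$ and thus $j\le d_i$, while $j-1\in[c_i,d_i]$ gives $j\ge c_i+1$. So each such $j$ is the step of a distinct $\alpha$-transition of $Y_i$ lying in $\{c_i+1,\ldots,d_i\}$, and there are exactly $N_{c_i,d_i}^\alpha(Y_i)$ of those. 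Summing over $i$, the number of $\beta$-transitions $X$ makes through slot $k$ is at most $\sum_{i=1}^q N_{c_i,d_i}^\alpha(Y_i)$, which is at most $\eps n$ by Lemma~\ref{lem:single-col} applied to the list $Y_1,\ldots,Y_q$ with the disjoint intervals $[c_i,d_i]$. The only delicate step is the one in the middle paragraph: confirming that the replacement scheme and the lifetime-commitment rule together force slot $k$ to be occupied over disjoint intervals whose right endpoints are precisely the death times of the $Y_i$, which is exactly the hypothesis needed to invoke Lemma~\ref{lem:single-col}; the rest is routine bookkeeping.
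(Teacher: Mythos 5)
Your proof is correct and follows essentially the same approach as the paper's: charge each $\beta$-transition of $X$ to a friendship slot, observe that the groups occupying a fixed slot do so over pairwise disjoint intervals ending at their death times, bound the $\beta$-transitions witnessed by each such group by its $\alpha$-transitions over that interval, and apply Lemma~\ref{lem:single-col} once per slot. If anything, your write-up is slightly more careful than the paper's, since you explicitly verify that the right endpoint of each occupation interval is the last index at which that group survives, which is exactly the hypothesis needed to invoke Lemma~\ref{lem:single-col}.
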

\begin{proof}
Let $X \in S_{n-1}$, and for each $k\in[t]$, let $\mc{Y}_k$ be the set of groups that $X$ lists as a friend in slot $k$ at some point in the evolution.  If $X$ makes a $\beta$-transition from $S_{j-1}$ to $S_j$, then there is a slot $k$ and group $Y\in \mc{Y}_k$ such that $F_{j-1}(X,k) = Y$ and $Y$ has non-empty intersection with $C_j$.  Because $Y \in S_{j-1}$ and $Y$ has non-empty intersection with $C_j$, we have  that $Y$ makes an $\alpha$-transition from $S_{j-1}$ to $S_j$.  It follows that
\[
	N_{0,n-1}^\beta(X) \le \sum_{k=1}^t \sum_{Y\in\mc{Y}_k} N_{I(Y)}^\alpha(Y)
\]
where $I(Y)$ is denotes the interval during which $X$ lists $Y$ as a friend.  (Formally, $j\in I(Y)$ if and only if $F_j(X,k)=Y$ for some $k\in[t]$.)  It suffices to show that $\sum_{Y\in\mc{Y}_k} N_{I(Y)}^\alpha(Y) \le \eps n$ for each $k\in[t]$.  Because $\{I(Y)\st Y\in\mc{Y}_k\}$ are disjoint intervals, the bound follows from Lemma~\ref{lem:single-col}.
\end{proof}

Next, we show that for each group $X$, the $\alpha$-transitions that $X$ makes constitute a large fraction of the total number of $X$'s transitions not of type $\beta$.

\begin{lemma}\label{lem:many-alphas}
Fix an evolution $(S_0,F_0),\ldots,(S_n,F_n)$ of $t$-societies.  If $X$ is a group, then $N_{0,j}^\alpha(X) \ge \eps(N_{0,j}^\alpha(X) + N_{0,j}^\gamma(X))$ for each $j$ with $X\in S_j$.
\end{lemma}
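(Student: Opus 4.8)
The plan is to fix a group $X$ and track the quantity $N_{0,j}^\alpha(X) - \eps\bigl(N_{0,j}^\alpha(X) + N_{0,j}^\gamma(X)\bigr)$ as $j$ increases over the indices with $X \in S_j$, showing it never drops below $0$. It is enough to check how this quantity changes across a single transition in which $X$ survives, i.e.\ from $S_{j-1}$ to $S_j$ with $X \in S_{j-1} \cap S_j$; and to handle the base case, which is $j$ equal to the smallest index with $X \in S_j$ (if $X$ is never in any $S_j$ the statement is vacuous). For the base case: before $X$ enters the picture all three counts are $0$, so the inequality reads $0 \ge 0$.

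For the inductive step, suppose $X \in S_{j-1} \cap S_j$ and consider the type $a$ of the $j$th transition of $X$. If $a = \alpha$, then $N_{0,j}^\alpha(X) = N_{0,j-1}^\alpha(X) + 1$ while $N_{0,j}^\gamma(X) = N_{0,j-1}^\gamma(X)$, so the left side increases by $1$ and the right side increases by $\eps$; since $\eps = 1/2t \le 1$, the inequality is preserved. If $a = \beta$, none of the three relevant counts change, so there is nothing to check. The interesting case is $a = \gamma$: here $N_{0,j}^\gamma(X) = N_{0,j-1}^\gamma(X) + 1$ and $N_{0,j}^\alpha(X) = N_{0,j-1}^\alpha(X)$, so I must produce one full unit of ``$\alpha$-credit'' to offset the new $\gamma$-transition.

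The key point is that a $\gamma$-transition at step $j$ happens precisely because condition~(3) fired: there is some $i$ with $N_{i,j-1}^\alpha(X) > \eps(j-i)$. I would pick such an $i$ and compare $N_{0,j-1}^\alpha(X)$ with $N_{0,i}^\alpha(X)$: since $N_{i,j-1}^\alpha(X) = N_{0,j-1}^\alpha(X) - N_{0,i}^\alpha(X)$, the firing of~(3) says $N_{0,j-1}^\alpha(X) - N_{0,i}^\alpha(X) > \eps(j - i)$. Combining with the inductive hypothesis applied at index $i$, namely $N_{0,i}^\alpha(X) \ge \eps\bigl(N_{0,i}^\alpha(X) + N_{0,i}^\gamma(X)\bigr)$, and using that $N_{0,i}^\gamma(X) \le i$ and more to the point that the number of transitions of all types that $X$ makes between $S_i$ and $S_{j-1}$ is at most $j - 1 - i < j - i$, I get
\[
N_{0,j-1}^\alpha(X) > \eps\bigl(N_{0,i}^\alpha(X) + N_{0,i}^\gamma(X)\bigr) + \eps(j-i) \ge \eps\bigl(N_{0,j-1}^\alpha(X) + N_{0,j-1}^\gamma(X)\bigr) + \eps,
\]
where the last inequality holds because $N_{0,j-1}^\alpha(X) + N_{0,j-1}^\gamma(X) \le N_{0,i}^\alpha(X) + N_{0,i}^\gamma(X) + (j-1-i)$ (each transition of $X$ strictly between $S_i$ and $S_{j-1}$ is counted once on the right, and there are at most $j-1-i < j-i$ of them). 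Since $N_{0,j}^\alpha(X) = N_{0,j-1}^\alpha(X)$ and $N_{0,j}^\gamma(X) = N_{0,j-1}^\gamma(X) + 1$, rearranging gives exactly $N_{0,j}^\alpha(X) \ge \eps\bigl(N_{0,j}^\alpha(X) + N_{0,j}^\gamma(X)\bigr)$, completing the step. The main obstacle is bookkeeping the "between $S_i$ and $S_{j-1}$" counts correctly — making sure the slack of a full $\eps$ from condition~(3) is not eaten up by $\gamma$-transitions accumulated in that window — and for this one should sharpen the accounting so that every $\gamma$-transition of $X$ in that window is itself charged against $\alpha$-transitions in a still earlier window, which is exactly what the induction hypothesis at index $i$ provides; it may be cleanest to phrase the whole argument as: the function $j \mapsto N_{0,j}^\alpha(X) - \eps\,(\text{total transitions of }X\text{ up to }j)$ is nondecreasing on the set of indices where $X$ is alive, which simultaneously encodes both the $\alpha$-gain and the condition~(3) reset.
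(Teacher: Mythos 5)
Your proposal is correct and follows essentially the same route as the paper: induction on $j$, with the only nontrivial case being a $\gamma$-transition, handled by combining the inductive hypothesis at the witness index $i$ from condition~(3) with the observation that the total number of transitions $X$ makes in the window bounds $N^\alpha + N^\gamma$ there. The paper simply runs the final accounting at index $j$ (via $N_{i,j}^\alpha(X)+N_{i,j}^\gamma(X)\le j-i$) rather than at $j-1$ with an explicit $+\eps$ for the new $\gamma$-transition, but the computation is the same.
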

\begin{proof}
If $j=0$, then the inequality holds.  For $j\ge 1$, the inequality holds immediately by induction unless $X$ makes a $\gamma$-transition from $S_{j-1}$ to $S_j$.  In this case, there is some $i$ such that $N_{i,j-1}^\alpha(X) > \eps (j-i)$.  Applying the inductive hypothesis to obtain a lower bound on $N_{0,i}^\alpha(X)$, it follows that 
\begin{align*}
N_{0,j}^\alpha(X) &= N_{0,i}^\alpha(X) + N_{i,j-1}^\alpha(X) \\
	& \ge \eps(N_{0,i}^\alpha(X)+N_{0,i}^\gamma(X)) + \eps(j-i) \\
	& \ge \eps(N_{0,i}^\alpha(X)+N_{0,i}^\gamma(X)) + \eps(N_{i,j}^\alpha(X)+N_{i,j}^\gamma(X)) \\
	& = \eps(N_{0,j}^\alpha(X)+N_{0,j}^\gamma(X))
\end{align*}
as required.
\end{proof}

We are now able to show that a long evolution implies that some group is large.

\begin{lemma}\label{lem:large-group}
Fix an evolution $(S_0,F_0),\ldots,(S_n,F_n)$ of $t$-societies.  If $X\in S_{n-1}$, then $|X| \ge (n-2)/4t$.
\end{lemma}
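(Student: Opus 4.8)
The plan is to bound $|X|$ below by the number of $\alpha$-transitions that $X$ makes, and then to argue, by combining the three preceding lemmas, that these $\alpha$-transitions make up a constant fraction of \emph{all} of $X$'s transitions. First I would note that, since $S_0 \speq S_1 \speq \cdots \speq S_{n-1}$ and $X\in S_{n-1}$, the group $X$ belongs to every $S_j$ with $0\le j\le n-1$ and hence makes a transition from $S_{j-1}$ to $S_j$ for each $j\in\{1,\ldots,n-1\}$, and exactly one of the rules (1)--(3) applies at each such step. Therefore
\[
	N_{0,n-1}^\alpha(X) + N_{0,n-1}^\beta(X) + N_{0,n-1}^\gamma(X) = n-1.
\]
By Lemma~\ref{lem:few-betas} together with $\eps = 1/2t$, we have $N_{0,n-1}^\beta(X) \le t\eps n = n/2$, so $N_{0,n-1}^\alpha(X) + N_{0,n-1}^\gamma(X) \ge n/2 - 1$.

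Next I would apply Lemma~\ref{lem:many-alphas} with $j = n-1$, which is legitimate because $X\in S_{n-1}$, to obtain
\[
	N_{0,n-1}^\alpha(X) \ge \eps\bigl(N_{0,n-1}^\alpha(X) + N_{0,n-1}^\gamma(X)\bigr) \ge \eps\left(\frac{n}{2} - 1\right) = \frac{n-2}{4t}.
\]
Finally, whenever $X$ makes an $\alpha$-transition from $S_{j-1}$ to $S_j$, the set $X$ has non-empty intersection with $C_j$; since the chains $C_1, C_2, \ldots$ are pairwise disjoint, distinct values of $j$ witness distinct elements of $X$, so $|X| \ge N_{0,n-1}^\alpha(X) \ge (n-2)/4t$, as claimed.

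The argument is short, and I do not anticipate a genuine obstacle; the only points requiring care are the bookkeeping ones above, namely that nestedness of the $S_j$ guarantees exactly one transition of a well-defined type at each of the first $n-1$ steps, and that Lemma~\ref{lem:many-alphas} applies at step $n-1$. It is also worth recording where the constant $1/4t$ comes from: the $\beta$-count is at most $t\eps n$ and the $\alpha$-count is at least an $\eps$-fraction of the non-$\beta$ transitions, so the quantity being maximized is essentially $\eps(1-t\eps)n$, which over $\eps\in(0,1/t)$ is largest at $\eps = 1/2t$ with value $n/4t$; this is the sense in which the earlier choice $\eps = 1/2t$ is optimal.
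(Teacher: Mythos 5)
Your proof is correct and follows essentially the same route as the paper's: bound $|X|$ below by $N_{0,n-1}^\alpha(X)$ via disjointness of the chains, use Lemma~\ref{lem:few-betas} to control $N_{0,n-1}^\beta(X)$, and apply Lemma~\ref{lem:many-alphas} at $j=n-1$ to conclude $N_{0,n-1}^\alpha(X)\ge \eps(n/2-1)=(n-2)/4t$. Your closing remark explaining why $\eps=1/2t$ maximizes $\eps(1-t\eps)n$ is exactly the optimality claim the paper alludes to when it introduces $\eps$.
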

\begin{proof}
Whenever $X$ makes an $\alpha$-transition from $S_{j-1}$ to $S_j$, it has non-empty intersection with chain $C_j$.  Because the chains are disjoint, it follows that $|X|\ge N_{0,n-1}^\alpha(X)$.  By Lemma~\ref{lem:many-alphas}, we have that $N_{0,n-1}^\alpha(X) \ge \eps(N_{0,n-1}^\alpha(X) + N_{0,n-1}^\gamma(X))$.  Note that $X$ makes $n-1$ transitions in total, because $X\in S_{n-1}$.  Hence $N_{0,n-1}^\alpha(X) + N_{0,n-1}^\beta(X) + N_{0,n-1}^\gamma(X) = n-1$.  By Lemma~\ref{lem:few-betas}, we have that $N_{0,n-1}^\alpha(X) + t\eps n + N_{0,n-1}^\gamma(X) \ge n-1$.  Consequently, $N_{0,n-1}^\alpha(X) \ge \eps(n-1 - t \eps n)$.  With $\eps = 1/2t$, we obtain $N_{0,n-1}^\alpha(X) \ge (n-2)/4t$ as required.
\end{proof}

\section{The Initial Society and Replacement Scheme}
\label{sec_thm}

It remains to describe the initial $t$-society and our replacement scheme.  Both depend on the following structural lemma about \rsfree\ posets.  The {\DEF height} of an element $x$, denoted $h(x)$, is the size of a largest chain with maximum element $x$.

\begin{lemma}\label{lem:init-society}
Let $r$ and $s$ be integers with $r\ge 2$ and $s\ge 2$, and let $P$ be an \rsfree\ poset.  There is a function $I$ which assigns to each element $x\in P$ a non-empty set of consecutive integers $I(x)$ with the following properties.
\begin{enumerate}
	\item For each integer $k$, the set $\{x\in P\st k\in I(x)\}$ induces a subposet of height at most $r-1$.
	\item If $x$ and $y$ are incomparable in $P$, then either $I(x)$ and $I(y)$ have non-empty intersection, or at most $s-2$ integers are strictly between $I(x)$ and $I(y)$.
\end{enumerate}
\end{lemma}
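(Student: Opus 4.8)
The target is an ``integer interval representation'' of $P$ that generalises the classical representation of interval orders (the case $r=s=2$). Concretely, I would assign to each $x\in P$ an interval $I(x)=[\lambda(x),\rho(x)]$ of integers built from two height statistics. Write $h(x)$ for the height of $x$ and $\ell(x)$ for the length of a longest chain all of whose members are \emph{not} above $x$ (equivalently, a longest chain inside the downset of $x$ together with the elements incomparable to $x$); note $h(x)\le\ell(x)$. The left endpoint $\lambda(x)$ is obtained from $h(x)$ by grouping heights into consecutive blocks of size $r-1$, so that two comparable elements can share a common integer only when their heights differ by less than $r-1$. The right endpoint $\rho(x)$ extends $\lambda(x)$ to the right by an amount proportional to the ``extra reach'' $\ell(x)-h(x)$, the proportionality constant being of order $s-1$; the point of the scaling is that the $s-2$ of slack allowed in property~(2) should correspond to a chain of length $s-1$. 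Since $\ell(x)\ge h(x)$ we get $\lambda(x)\le\rho(x)$, so each $I(x)$ is a nonempty interval of integers.

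Property~(2) should then be almost immediate. If $x\parallel y$ with, say, $h(x)\le h(y)$, then $y$ is one of the elements witnessing $\ell(x)$ (it is not above $x$), so $\ell(x)\ge h(y)$; unwinding the definition of $\rho$ this forces $\rho(x)\ge\lambda(y)$, while trivially $\lambda(x)\le\lambda(y)$. Hence in the generic case $\lambda(y)\in I(x)\cap I(y)$, and the blocking of heights by $r-1$ together with the scaling by $s-1$ leaves $I(x)$ and $I(y)$ either overlapping or separated by at most $s-2$ integers. The only real task here is to keep the rounding under control.

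Property~(1) is the heart of the matter, and this is where $(\chain{r}+\chain{s})$-freeness is used. Suppose a chain $z_1<\dots<z_m$ all contain a common integer $k$; I want $m\le r-1$. From $\lambda(z_m)\le k$ and the blocking, $h(z_m)$ is at most roughly $k(r-1)$, and since heights strictly increase along the chain this bounds $h(z_1)$ from above as well. From $k\le\rho(z_1)$ and the definition of $\rho$ there is a witness $w$, not above $z_1$, whose $\ell$-value (hence height) is large compared with $h(z_1)$ -- and, crucially, large by a factor reflecting the $s-1$ scaling. If $w<z_1$ a direct comparison of heights already yields $m\le r-1$. If instead $w\parallel z_1$, take a longest chain ending at $w$; its portion at heights at least $h(z_1)$ consists entirely of elements incomparable to $z_1$, and because $w$ was forced so much higher than $z_1$, a long enough top segment of that chain is incomparable not merely to $z_1$ but to $z_1,z_2,\dots,z_r$ simultaneously. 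If $m\ge r$ this segment can be taken of length at least $s$, and together with $\{z_1,\dots,z_r\}$ it is an induced $\chain{r}+\chain{s}$, a contradiction. Hence $m\le r-1$.

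The step I expect to be the main obstacle is calibrating the two constants so that they come out as the sharp $r-1$ and $s-2$ rather than the easy but wasteful $r+s-2$: one must pick the block size for $h$ and the inflation factor in $\rho$ so that, in the staircase of incomparabilities between $z_1<\dots<z_m$ and the chain extracted from $w$, a single $r\times s$ ``incomparable box'' is forced precisely when $m\ge r$, while the same choices cost no more than $s-2$ in property~(2). Equivalently, the naive height-based definitions of $\lambda$ and $\rho$ do \emph{not} already achieve antichain-blocking of comparable elements (small examples built from interval orders show two comparable elements landing in a common level), so the definition has to be refined -- likely via a simultaneous or recursive procedure that assigns endpoints while respecting the blocking -- before the staircase count closes. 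Getting this refinement and the attendant chain arithmetic exactly right is the delicate part; everything else in the argument is routine.
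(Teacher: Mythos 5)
Your proposal does not actually construct the function $I$: you define $\lambda$ and $\rho$ only up to an unspecified ``blocking'' of heights and an unspecified scaling factor, and you yourself concede at the end that the naive version of this definition fails property~(1) on small examples and would need a further ``simultaneous or recursive'' refinement that you do not supply. Since the entire content of the lemma is the construction of $I$, this is a genuine gap, not a routine detail. The calibration problem you flag is real and, I believe, not resolvable within your framework: you want the left endpoints to advance by one unit per $r-1$ levels of height \emph{and} the right endpoint to reach $\lambda(y)$ whenever $y$ is incomparable to $x$ (up to slack $s-2$), and these two requirements pull the scaling constant in opposite directions depending on whether $r\le s$ or $r\ge s$. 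Even your claim that property~(2) is ``almost immediate'' does not survive inspection: with blocks of size $r-1$ and an inflation factor tied to $s-1$, the gap between $\rho(x)$ and $\lambda(y)$ for incomparable $x,y$ is in general unbounded unless the $(\chain{r}+\chain{s})$-freeness is invoked there as well, which your sketch does not do.

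The difficulty dissolves if you invert which property is ``by construction'' and which uses the hypothesis. Keep the left endpoint as $h(x)$ itself, with no blocking, and define the right endpoint not from a longest chain of elements \emph{not above} $x$ but from the chains of size $r$ \emph{above} $x$: let $b(x)$ be the minimum height of an element $z$ such that some chain of size $r$ has minimum $x$ and maximum $z$ (with $b(x)=q+1$ if no such $z$ exists, $q$ the height of $P$), and set $I(x)=\{h(x),\ldots,b(x)-1\}$. Then property~(1) holds for \emph{every} poset with no case analysis: if $x_1<\cdots<x_r$ all had $k\in I(x_i)$, then $k\ge h(x_r)$ and $k\le b(x_1)-1\le h(x_r)-1$. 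The $(\chain{r}+\chain{s})$-freeness is needed only for property~(2), where the extremality of $b(x)$ and of $h(y)$ produces an explicit chain of size $r$ through $x$ and a chain of size $j-i$ through $y$ that are elementwise incomparable, forcing $j-i\le s-1$. Your staircase argument for property~(1) is trying to extract a $\chain{r}+\chain{s}$ where none is needed, and that is the source of the arithmetic you could not close.
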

\begin{proof}
Let $q$ be the height of $P$.  For each $x\in P$, let $Z(x)$ be the set of all elements $z$ such that $P$ contains a chain of size $r$ with minimum element $x$ and maximum element $z$.  When $Z(x)$ is non-empty, define $b(x)$ to be the minimum height of an element in $Z(x)$; we set $b(x)=q+1$ when $Z(x)=\ns$.  Let $I(x) = \{h(x), \ldots, b(x) - 1\}$.
	
Fix an integer $k$ and let $X = \{x\in P\st k\in I(x)\}$.  Suppose for a contradiction that $X$ contains a chain $x_1 < \cdots < x_r$.  Since $x_r \in X$, we have that $k \in I(x_r)$, which implies that $h(x_r) \le k$.  Similarly, $k\in I(x_1)$ and therefore $k \le b(x_1) - 1$.  Since $x_r \in Z(x_1)$, it follows that $b(x_1)\le h(x_r)$.  Hence $h(x_r)\le k \le h(x_r) - 1$, a contradiction.  It follows that (1) holds.

It remains to check (2).  Suppose that $x$ and $y$ are incomparable.  If $I(x)$ and $I(y)$ have non-empty intersection, then (2) holds.  Hence, we may assume that every integer in $I(x)$ is less than every integer in $I(y)$.  Let $i$ be the greatest integer in $I(x)$ and let $j$ be the least integer in $I(y)$, and note that $i<j\le q$.  Since $i\in I(x)$ but $i+1\not\in I(x)$, it follows that $b(x)-1 = i$.  Because $i<q$, it follows that $b(x)=i+1\le q$ and therefore $Z(x) \ne\ns$.  Hence, there is a chain $x=x_1<\cdots <x_r$ in $P$ with $h(x_r) = i+1$.  Similarly, $h(y) = j$ and there is a chain $y = y_j > \cdots > y_1$ in $P$ with $h(y_k)=k$ for each $k\in [j]$.  Let $X=\{x_1,\ldots,x_r\}$ and let $Y=\{y_{i+1},\ldots,y_j\}$.  We claim that every element in $X$ is incomparable to every element in $Y$.  If $x_a \le y_b$, then transitivity implies that $x = x_1 \le y_j = y$, contrary to the assumption that $x$ and $y$ are incomparable. Conversely, if $y_a \le x_b$, then transitivity implies that $y_{i+1}\le x_r$.  But $y_{i+1}\le x_r$ is impossible because $y_{i+1}$ and $x_r$ are distinct (since $x_{r} \not \leq y_{i+1}$) and have the same height.  Hence every element in $X$ is incomparable to every element in $Y$ as claimed.

It follows that $X \cup Y$ induces a copy of $\chain{r} + \chain{j-i}$ in $P$.  Because $P$ is \rsfree , we have that $j-i \le s-1$ and therefore the set of integers $\{i+1,\ldots,j-1\}$ strictly between $I(x)$ and $I(y)$ has size at most $s-2$.
\end{proof}

We now have the tools necessary to describe the initial $t$-society and our replacement scheme.  While our transition rules require only that each $S_j$ is a set of groups, our replacement scheme imposes additional structure on $S_j$.  In particular, our replacement scheme treats $S_j$ as a list of groups.  Let $q$ be the height of $P$.  With $I$ as in Lemma~\ref{lem:init-society}, we define $X_k = \{x\in P\st k\in I(x)\}$ when $1\le k \le q$ and set $S_0 = X_1,\ldots,X_q$.  This ordering is preserved throughout the evolution: if $Y$ appears before $Z$ in $S_0$ and $\{Y,Z\} \subseteq S_j$, then $Y$ also appears before $Z$ in $S_j$.  When $L$ is a list of objects $a_1, \ldots, a_n$, we define $\dist_L(a_i,a_j) = |j-i|$.  For convenience, when $Y$ and $Z$ are groups in $S_j$, we define $\dist_j(Y,Z) = \dist_{S_j}(Y,Z)$. 

Let $t=2(s-1)$.  In the initial $t$-society $(S_0, F_0)$, we define $F_0$ so that if $Y$ and $Z$ are distinct groups in $S_0$ with $\dist_0(Y,Z) \le s-1$, then $F(Y,k) = Z$ for some slot $k$.  If fewer than $2(s-1)$ groups in $S_0$ are at distance at most $s-1$ from $Y$, then some slots are empty (formally, $F(Y,k) = \star$).  Our replacement scheme maintains that in $t$-society $(S_j,F_j)$, a group $Y$ lists as friends all other groups $Z$ such that $\dist_j(Y,Z) \le s-1$.  This is possible to maintain since $\dist_j(Y,Z) < \dist_{j-1}(Y,Z)$ only occurs when some group $Z' \in S_{j-1}$ with $\dist_{j-1}(Y,Z') < \dist_{j-1}(Y,Z)$ does not survive the transition from $(S_{j-1},F_{j-1})$ to $(S_j,F_j)$.  It follows that at least as many of $Y$'s friendship slots become available as are needed to accommodate the groups $Z$ with $\dist_{j-1}(Y,Z)>s-1$ and $\dist_{j}(Y,Z)\le s-1$.  Our replacement scheme places these groups in $Y$'s available friendship slots arbitrarily.  As before, unused slots are assigned the value $\star$.  

\comment{
   First, for $Y \in S_{0}$, the friends of $Y$ in $(S_{0}, F_{0})$ are the groups $Z \in S_{0}$ distinct from $Y$ such that $\dist_0(Y,Z) \le s-1$. (These groups are arbitrarily assigned to the $t$ slots of $Y$; slots that are unused are formally given the special value $\star$.)
Now, let $j\geq 1$ and let us consider the transition between the $t$-societies $(S_{j-1},F_{j-1})$ and $(S_j,F_j)$. Let $Y \in S_{j}$ and denote by $D^{-}$ be the set of friends of $Y$ in $S_{j-1}$ that must be replaced, that is, 
$$
D^{-} = \{Z \in S_{j-1}: Z \notin S_{j} \text{ and } F_{j-1}(Y, k)= Z \text{ for some } k\in [t]\}.
$$
Let $D^{+}$ be the set of groups $Z \in S_{j}$ such that
$\dist_{j-1}(Y,Z) > s-1$ but $\dist_j(Y,Z) \le s-1$. Then $|D^{+}| \leq |D^{-}|$, since $t=2(s-1)$
and $\dist_{j-1}(Y,Z) \le s-1$ for every friend $Z$ of $Y$ in $(S_{j-1},F_{j-1})$. Choose an arbitrary injection $f: D^{+} \to D^{-}$. For every slot $F_{j}(Y, k)$ with $Z:=F_{j-1}(Y, k) \in D^{-}$, let $F_{j}(Y, k) := f^{-1}(Z)$ if $f^{-1}(Z) \neq \ns$ and $F_{j}(Y, k) := \star$ otherwise.
}

Our next aim is to show that our initial $t$-society and replacement scheme lead to a long evolution.  We first prove an analogue of Lemma~4.2 in~\cite{PRV_TALG}.

\begin{lemma}\label{lem:lem1-analogue}
Let $C_1, \ldots, C_m$ be a \ff , and define $C_j = \ns$ for $j>m$.  Let $(S_0,F_0)$ be our initial $t$-society, and let $(S_0,F_0),\ldots,(S_n,F_n)$ be the evolution resulting from our replacement scheme.  For each $i$, we have that $\bigcup_{X\in S_i} X \supseteq \bigcup_{j>i} C_j$.
\end{lemma}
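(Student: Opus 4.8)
The plan is to prove the inclusion by induction on $i$. For $i=0$ it is immediate: $S_0=X_1,\dots,X_q$, where $q$ is the height of $P$, and since $h(x)\in I(x)$ and $1\le h(x)\le q$ for every $x\in P$, we get $\bigcup_{X\in S_0}X=P\supseteq\bigcup_{j>0}C_j$. For the inductive step, assume $\bigcup_{X\in S_{i-1}}X\supseteq\bigcup_{j\ge i}C_j$, fix $y\in C_j$ with $j>i$, and suppose for contradiction that $y$ lies in no group of $S_i$. By the induction hypothesis $y$ lies in some group of $S_{i-1}$, so $B_y:=\{X\in S_{i-1}\st y\in X\}$ is non-empty; by assumption no group of $B_y$ survives the $i$th transition, so each $X\in B_y$ makes none of the $\alpha$-, $\beta$-, $\gamma$-transitions. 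In particular, for every $X\in B_y$ we have $X\cap C_i=\ns$ and no friend of $X$ in $(S_{i-1},F_{i-1})$ meets $C_i$.

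Next I would invoke the defining property of the \ff : since $i<j$ and $y\in C_j$, there is an element $x\in C_i$ incomparable to $y$. As $x\in C_i$, the induction hypothesis gives that $x$ lies in some group of $S_{i-1}$, so $B_x:=\{X\in S_{i-1}\st x\in X\}$ is non-empty; and every group of $B_x$ meets $C_i$, hence makes an $\alpha$-transition and survives. The key step is then to exhibit $X\in B_y$ and $X'\in B_x$ with $\dist_{i-1}(X,X')\le s-1$. Granting this, $X\ne X'$ (because $X\cap C_i=\ns\ne X'\cap C_i$), so by our replacement scheme $X$ lists $X'$ as a friend in $(S_{i-1},F_{i-1})$; but $X'$ meets $C_i$, so $X$ makes a $\beta$-transition and survives, contradicting the choice of $B_y$. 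This contradiction yields $y\in\bigcup_{X\in S_i}X$ and finishes the induction.

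The step I expect to be the real obstacle is producing this close pair $(X,X')$, which is where the initial society, the order-preserving replacement scheme, and Lemma~\ref{lem:init-society}(2) must be combined. Two observations do the job. First, the evolution preserves the ordering of $S_0$, in which $X_k$ precedes $X_{k'}$ iff $k<k'$; since the set of indices $k$ with $k\in I(y)$ is an interval of integers and every surviving group whose index lies in it belongs to $B_y$, the groups of $B_y$ occupy a run of consecutive positions in $S_{i-1}$, and likewise for $B_x$. Consequently any group of $S_{i-1}$ lying strictly between these two runs has index strictly between $I(y)$ and $I(x)$. Second, since $x$ and $y$ are incomparable, Lemma~\ref{lem:init-society}(2) tells us that either $I(x)$ and $I(y)$ meet --- in which case either the two runs share a group ($\dist_{i-1}=0$) or they are adjacent with nothing between them ($\dist_{i-1}=1$) --- or at most $s-2$ integers lie strictly between $I(x)$ and $I(y)$, so at most $s-2$ groups lie between the runs and the extreme groups of the two runs are at distance at most $s-1$. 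The one point that needs care is checking that deleting ``dead'' groups during the evolution never inserts anything between the surviving representatives of $x$ and $y$; this is exactly what the order-preservation property rules out, and I would write that part out in full.
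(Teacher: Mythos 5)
Your proof is correct and follows essentially the same route as the paper's: induction on $i$, the First-Fit property to produce an element of $C_i$ incomparable to $y$, and Lemma~\ref{lem:init-society}(2) combined with order preservation to locate a group containing $y$ within distance $s-1$ of a group meeting $C_i$, which forces an $\alpha$- or $\beta$-transition. The only difference is cosmetic: the paper selects a distance-minimizing pair of groups and bounds its distance directly, whereas you argue by contradiction via the observation that the surviving groups containing a fixed element occupy consecutive positions in $S_{i-1}$.
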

\begin{proof}
By induction on $i$.  By Lemma~\ref{lem:init-society}, $I(x) \ne \ns$ for each element $x$, and therefore $\bigcup_{X\in S_0} X$ contains all elements in $P$.  

Let $i\ge 1$ and consider an element $y\in C_j$ with $j>i$.  Because $C_1,\ldots,C_m$ is a \ff , there is an element $z\in C_i$ such that $y$ and $z$ are incomparable.  By induction, there are groups $Y\in S_{i-1}$ and $Z\in S_{i-1}$ with $y\in Y$ and $z\in Z$.  Among all such pairs $\{Y,Z\}$, choose $Y$ and $Z$ to minimize $\dist_{i-1}(Y,Z)$.  We claim that $\dist_{i-1}(Y,Z)\le s-1$.  Indeed, if $\dist_{i-1}(Y,Z)\ge s$, then there are at least $s-1$ groups in $S_{i-1}$ that are strictly between $Y$ and $Z$ in the list $X_1, \ldots, X_q$.  By our selection of $Y$ and $Z$, none of these groups contain $y$ or $z$.  Hence, it follows that the index of each such group is strictly between $I(y)$ and $I(z)$, contradicting Lemma~\ref{lem:init-society}.

Because $\dist_{i-1}(Y,Z) \le s-1$, our replacement scheme ensures that $Y$ lists $Z$ as a friend in some slot.  Because $z \in Z \cap C_i$, some friend of $Y$ in $(S_{i-1},F_{i-1})$ has non-empty intersection with $C_i$.  It follows that $Y$ either makes an $\alpha$-transition or a $\beta$-transition from $S_{i-1}$ to $S_i$.  Hence $y\in Y \in S_i$ and therefore $y\in \bigcup_{X\in S_i}X$ as required.
\end{proof}

\begin{lemma}\label{lem:long-evolution}
Let $C_1, \ldots, C_m$ be a \ff , and define $C_j = \ns$ for $j>m$.  Let $(S_0,F_0)$ be our initial $t$-society, and let $(S_0,F_0),\ldots,(S_n,F_n)$ be the evolution resulting from our replacement scheme.  We have that $n\ge m+2$.
\end{lemma}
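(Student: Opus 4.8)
The plan is to track one particular group through the last two transitions and show it survives all the way to $S_{m+1}$; since exhibiting such a group certifies that the evolution reaches stage $m+1$ with a non-empty society, the first empty society must occur at index at least $m+2$, giving $n\ge m+2$.

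First I would record that $n\ge m$. We may assume $P$ is non-empty, so $m\ge 1$ and the chain $C_m$ is non-empty. Applying Lemma~\ref{lem:lem1-analogue} with $i=n$ gives $\varnothing=\bigcup_{X\in S_n}X\supseteq\bigcup_{j>n}C_j$, so $C_j=\varnothing$ for every $j>n$; since $C_m\ne\varnothing$ this forces $n\ge m$. In particular $S_{m-1}\ne\varnothing$, so the transition producing $S_m$ is defined.

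Next, apply Lemma~\ref{lem:lem1-analogue} with $i=m-1$ to obtain a group $X\in S_{m-1}$ containing some element of $C_m$. Because $X$ has non-empty intersection with $C_m$, transition rule~(1) forces $X$ to make an $\alpha$-transition from $S_{m-1}$ to $S_m$. Hence $X\in S_m$ --- so $S_m\ne\varnothing$, $n\ge m+1$, and the transition producing $S_{m+1}$ is defined --- and moreover $N^\alpha_{m-1,m}(X)=1$.

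The crux is the final transition, from $S_m$ to $S_{m+1}$, where no chains remain to keep groups alive: $C_{m+1}=\varnothing$, so rules~(1) and~(2) cannot apply to $X$. I would then invoke rule~(3) with the choice $i=m-1$: the required inequality is $N^\alpha_{m-1,m}(X)>\eps\bigl((m+1)-(m-1)\bigr)=2\eps=1/t$, which holds since $N^\alpha_{m-1,m}(X)=1$ and $t=2(s-1)\ge 2$. Thus $X$ makes a $\gamma$-transition and $X\in S_{m+1}$, so $S_{m+1}\ne\varnothing$ and $n\ge m+2$. The only subtlety is the bookkeeping about which $S_i$ are defined before we argue about transitions into them, and this takes care of itself: placing $X$ in $S_m$ and then in $S_{m+1}$ retroactively certifies that the evolution reaches those stages. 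There is no real computation beyond comparing $1$ with $1/t$; the point is that a single $\alpha$-transition --- available because $X$ meets the last chain $C_m$ --- triggers, through the $\gamma$-rule, one further transition, and these two extra transitions past $S_{m-1}$ are exactly what cancels the ``$-2$'' that will appear in Lemma~\ref{lem:large-group}.
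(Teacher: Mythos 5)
Your proof is correct and follows essentially the same route as the paper's: pick a group of $S_{m-1}$ meeting $C_m$ via Lemma~\ref{lem:lem1-analogue}, note it makes an $\alpha$-transition into $S_m$, and then use the $\gamma$-rule with $i=m-1$ (since $1 = N^\alpha_{m-1,m}(X) > 2\eps = 1/t$) to push it into $S_{m+1}$. The extra bookkeeping you add about which stages are defined, and the observation that rules~(1) and~(2) cannot fire at the last step, are fine but not needed beyond what the paper already does.
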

\begin{proof}
Let $y\in C_m$.  By Lemma~\ref{lem:lem1-analogue}, there is a group $Y\in S_{m-1}$ with $y\in Y$.  Because $Y$ has non-empty intersection with $C_m$, we have that $Y$ makes an $\alpha$-transition from $S_{m-1}$ to $S_m$.  Also, $N_{m-1,m}^\alpha(Y) = 1$ and $\eps((m+1)-(m-1)) = 2\eps = 1/t = 1/(2(s-1)) \le 1/2$, and therefore $Y$ is eligible to make a $\gamma$-transition from $S_m$ to $S_{m+1}$.  Hence $Y\in S_{m+1}$.  Because the evolution ends with an empty $t$-society, it follows that $n\ge m+2$.
\end{proof}

Putting all the pieces together, we obtain our main theorem.

\begin{theorem}
If $r$ and $s$ are at least $2$ and $P$ is an \rsfree\ poset of width $w$, then First-Fit partitions $P$ into at most $8(r-1)(s-1)w$ chains.
\end{theorem}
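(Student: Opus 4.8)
The plan is to run the machinery of Sections~\ref{sec_evo} and~\ref{sec_thm} on a chain partition produced by First-Fit. Suppose First-Fit partitions $P$ into chains $C_1,\ldots,C_m$; by the first paragraph of Section~\ref{sec_evo}, this ordered partition is a \ff, so it suffices to bound $m$ for an arbitrary \ff\ of $P$. If $P$ is empty the claim is trivial, so assume $m\ge 1$, whence the height $q$ of $P$ satisfies $q\ge 1$. I would set $t=2(s-1)$, apply Lemma~\ref{lem:init-society} to obtain the intervals $I(x)$ together with the associated initial $t$-society $(S_0,F_0)$ whose groups are exactly the sets $X_k=\{x\in P\st k\in I(x)\}$ for $k\in[q]$, extend the \ff\ by $C_j=\ns$ for $j>m$, and let $(S_0,F_0),\ldots,(S_n,F_n)$ be the resulting evolution under the replacement scheme described above.

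Next I would simply combine Lemmas~\ref{lem:long-evolution} and~\ref{lem:large-group}. By Lemma~\ref{lem:long-evolution}, $n\ge m+2$. Since $q\ge 1$ we have $S_0\ne\ns$, and as the evolution halts the first time an empty $t$-society is produced, $S_{n-1}\ne\ns$; fix a group $X\in S_{n-1}$. Lemma~\ref{lem:large-group} then gives
\[
 |X|\ \ge\ \frac{n-2}{4t}\ \ge\ \frac{m}{4t}\ =\ \frac{m}{8(s-1)}.
\]
The key point is that the replacement scheme only rewrites the friendship function and never alters the groups themselves, so $X$ is literally one of the $X_k$; hence property~(1) of Lemma~\ref{lem:init-society} applies and $X$ induces a subposet of $P$ of height at most $r-1$.

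To finish, I would invoke Mirsky's theorem: a poset of height at most $r-1$ is a union of at most $r-1$ antichains, so $X$ contains an antichain of size at least $|X|/(r-1)$, which is also an antichain of $P$. Therefore
\[
 w\ \ge\ \frac{|X|}{r-1}\ \ge\ \frac{m}{8(r-1)(s-1)},
\]
that is, $m\le 8(r-1)(s-1)w$, as required.

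I do not expect a substantial obstacle at this stage: the real work is already done in the preceding lemmas (in particular, property~(2) of Lemma~\ref{lem:init-society} has been consumed inside Lemmas~\ref{lem:lem1-analogue} and~\ref{lem:long-evolution}, so it plays no further role here). The two things to be careful about are the bookkeeping that guarantees $S_{n-1}\ne\ns$, so that Lemma~\ref{lem:large-group} is applicable, and the observation that $X\in S_{n-1}$ is one of the original groups $X_k$, so that Lemma~\ref{lem:init-society}(1) can be invoked. It is also worth noting that the choice $\eps=1/2t$ fixed in Section~\ref{sec_evo} is exactly what makes the constant come out to $8(r-1)(s-1)$: in the proof of Lemma~\ref{lem:large-group} the $\alpha$-count is bounded below by $\eps(n-1-t\eps n)$, and $\eps(1-t\eps)$ is maximized at $\eps=1/2t$, where it equals $1/4t$.
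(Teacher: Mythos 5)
Your proposal is correct and follows essentially the same route as the paper's own proof: fix a \ff, build the initial $t$-society from Lemma~\ref{lem:init-society}, combine Lemma~\ref{lem:long-evolution} with Lemma~\ref{lem:large-group} to find a group of size at least $m/(8(s-1))$, and use the height bound from Lemma~\ref{lem:init-society}(1) to extract an antichain of size $|X|/(r-1)$. Your extra bookkeeping (that $S_{n-1}\ne\ns$ and that the surviving group is literally one of the $X_k$) is a sound, slightly more careful rendering of what the paper leaves implicit.
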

\begin{proof}
Let $C_1,\ldots,C_m$ be a \ff , and define $C_j = \ns$ for $j>m$.  Obtain our initial $t$-society $(S_0,F_0)$ from Lemma~\ref{lem:init-society}, and let $(S_0,F_0),\ldots,(S_n,F_n)$ be the evolution obtained with our replacement scheme.  By Lemma~\ref{lem:long-evolution}, we have that $n\ge m+2$.  By Lemma~\ref{lem:large-group}, some group $X\in S_0$ has size at least $(n-2)/4t = (n-2)/(8(s-1)) \ge m/(8(s-1))$.  By Lemma~\ref{lem:init-society}, the height of $X$ is at most $r-1$.  It follows that $X$ is the union of $r-1$ antichains, and therefore $w \ge |X|/(r-1) \ge m/(8(s-1)(r-1))$.
\end{proof}

\section{Concluding Remarks}

The following related problem is open: 
for which posets $Q$ of width $2$ is there a function $f_{Q}(w)$
such that First-Fit partitions every $Q$-free poset of width $w$ into at most $f_{Q}(w)$ chains?
The same question applies when $f_{Q}(w)$ is restricted to be a polynomial or a linear function of $w$.
We note that these problems are only interesting for posets $Q$ of width $2$. Indeed, there is
a trivial linear bound when $Q$ is a chain, and the example of Kierstead~\cite{K_CM} implies that no such function exists when the width of $Q$ is at least $3$.

\section*{Addendum}
While this article was under review, Bosek, Krawczyk, and Matecki~\cite{BKM_unpub} proved that for each poset $Q$ of width $2$, there is a function $f_{Q}(w)$ such that First-Fit partitions every $Q$-free poset of width $w$ into at most $f_{Q}(w)$ chains.  Our second question remains open. 

\section*{Acknowledgements}

This research was initiated while the authors were attending the First Montreal Spring School in
Graph Theory, held in Montreal in May 2010. The authors are grateful to the organizers of the school 
for providing a stimulating working environment. The second author thanks William T. Trotter for engaging talks on the subject and for relaying the problem studied in this article.

\bibliographystyle{plain}
\bibliography{paper}

\end{document}